\newtheorem{theorem}{Theorem}[section]
\newtheorem{assumption}[theorem]{Assumption}
\newtheorem{definition}[theorem]{Definition}
\newtheorem{lemma}[theorem]{Lemma}
\newtheorem{remark}[theorem]{Remark}
\newenvironment{proof}[1][Proof]{\noindent\textit{#1.} }{\hfill \rule{0.5em}{0.5em}}
\numberwithin{equation}{section}
\newcommand{\dint}{\displaystyle\int}
\begin{document}

\title{\textbf{A system of state-dependent delay differential equation
modelling forest growth II: boundedness of solutions}}
\author{ \textsc{Pierre Magal and Zhengyang Zhang \footnote{The research of this author is supported by China Scholarship Council.} } \\
{\small Univ. Bordeaux, IMB, UMR 5251, F-33076 Bordeaux, France}\\
{\small CNRS, IMB, UMR 5251, F-33400 Talence, France}}
\maketitle
\textbf{Abstract:} {\small In this article we consider a class of state-dependent delay differential equations which is modelling the dynamics of the number of adult trees in forests. We prove the boundedness and the dissipativity of the solutions for an $n$-species model.}\newline
\noindent \textbf{Keywords:} State-dependent delay differential equations, forest population dynamics, boundeness of solutions \newline
\textbf{AMS Subject Classication} :  	34K05,  	37L99,  37N25.

\section{Introduction}
In this article we are interested in a state-dependent delay differential equation modelling the growth of forest. Following Magal and Zhang \cite{Magal2}, when the forest is composed of a single species of trees, we have the following system
\begin{equation}\label{EQ1.1}
\left\{ 
\begin{array}{l}
A^{\prime }(t)=-\mu _{A}A(t)+\beta e^{-\mu _{J}\tau (t)}\dfrac{f(A(t))}{f(A(t-\tau (t)))}A(t-\tau (t)),\forall t\geqslant 0, \vspace{0.1cm}\\ 
\dint _{t-\tau (t)}^{t}f(A(\sigma ))d\sigma =\dint _{-\tau _{0}}^{0}f(\varphi (\sigma )) d\sigma ,\forall t\geqslant 0,
\end{array}
\right.  
\end{equation}
with the initial conditions 
\begin{equation*}
A(t)=\varphi (t)\geqslant 0,\forall t\leqslant 0\text{ and }\tau (0)=\tau _{0}\geqslant 0,
\end{equation*}
where $\varphi\geqslant 0$ belongs to 
\begin{equation*}
X_{\alpha }:=\left\{ \phi \in C(-\infty ,0]:e^{-\alpha \vert
.\vert }\phi (.)\in BUC(-\infty ,0]\cap \mathrm{Lip}(-\infty ,0]\right\},
\end{equation*}
which is a Banach space endowed with the norm
\begin{equation*}
\Vert\phi\Vert _{X_{\alpha}}:=\Vert e^{-\alpha\vert .\vert}\phi(.)\Vert _{\infty ,(-\infty ,0]}+\Vert e^{-\alpha\vert .\vert}\phi(.)\Vert _{\mathrm{Lip}(-\infty ,0]},
\end{equation*}
where $BUC(-\infty ,0]$ denotes the space of bounded uniformly continuous functions from $(-\infty ,0]$ to $\mathbb{R}$, and $\mathrm{Lip}(-\infty ,0]$ denotes the space of Lipschitz functions from $(-\infty ,0]$ to $\mathbb{R}$. 

Equation (\ref{EQ1.1}) models the dynamics of the adult population of trees. Here $A(t)$ is the number of adult trees at time $t$, $\tau (t)$ is the time needed by newborns to become adult at time $t$, $\mu _{A}>0$ is the mortality rate of the adult trees, $\mu _{J}>0$ is the mortality rate of the juvenile trees, $\beta >0$ is the birth rate. In the context of forest modelling (see \cite{Magal2}), $f(A(t))$ describes the growth rate of juveniles, and the function $f$ is capturing the effect of the competition for light between adults and juveniles. For mathematical convenience, we will make the following assumption.
\begin{assumption}\label{ASS1.1}
We assume that
\begin{itemize}
\item[(i)]  The coefficients $\mu _{A}>0$, $\mu _{J}>0$, $\beta >0$;
\item[(ii)] The function $f:\mathbb{R}\rightarrow (0,+\infty )$ is Lipschitz continuous and continuously differentiable with 
\begin{equation*}
f(x)>0,\lim_{x\rightarrow +\infty }f(x)=0\text{ and }f^{\prime }(x)\leqslant 0,\forall x\in \mathbb{R}
\end{equation*}
\end{itemize}
\end{assumption}

Actually the system (\ref{EQ1.1}) has been first derived by Smith \cite{Smith1} from a size-structured model of the form
\begin{equation*}
\left\{ 
\begin{array}{l}
A^{\prime }(t)=-\mu _{A}A(t)+f(A(t))j(t,s^{\ast }),\forall t\geqslant 0, \vspace{0.1cm}\\ 
\partial _{t}j(t,s)+f(A(t))\partial _{s}j(t,s)=-\mu _{J}j(t,s),\forall s\in [ s_{-},s^{\ast }], \vspace{0.1cm}\\ 
f(A(t))j(t,s_{-})=\beta A(t), \vspace{0.1cm}\\
A(0)=A_{0}\geqslant 0, \vspace{0.1cm}\\
j(0,s)=j_{0}(s)\geqslant 0, \forall s\in [s_{-},s^{\ast }),
\end{array}
\right.
\end{equation*}
where $0\leqslant s_{-}<s^{\ast }$ are the minimal and maximal size of juveniles, and $j(t,s)$ is the density of juveniles with size $s$ at time $t$. The system (\ref{EQ1.1}) has also been extensively studied by Smith in \cite{Smith1, Smith2,  Smith3, Smith4}, where the author introduced a change of variable to transform this kind of state-dependent delay differential equation into a constant delay differential equation. The change of variable is given by 
\begin{equation*}
x=\int_{0}^{t}f(A(\sigma ))d\sigma =:\Phi (t).
\end{equation*}
Set
\begin{equation*}
\delta :=\int_{-\tau _{0}}^{0}f(\varphi (\sigma )) d\sigma \geqslant 0,
\end{equation*}
then for $x\geqslant \delta$, 
\begin{equation*}
x-\delta =\int_{0}^{t}f(A(\sigma ))d\sigma -\int_{t-\tau (t)}^{t}f(A(\sigma ))d\sigma =\int_{0}^{t-\tau (t)}f(A(\sigma ))d\sigma =\Phi (t-\tau (t)),
\end{equation*}
This means that $x-\delta $ corresponds to $t-\tau (t)$ under this change of variable. Moreover by setting $W(x)=A(t)$ and using the same arguments as in Smith \cite{Smith1}, one also has 
\begin{equation*}
\tau ( t) =\int_{-\delta }^{0}f(W(x+r))^{-1}dr.
\end{equation*}
Therefore Smith \cite{Smith1} obtained the following constant delay differential equation 
\begin{equation}\label{EQ1.2}
W^{\prime }(x)=-\mu _{A}\dfrac{W(x)}{f(W(x))}+\beta e^{-\mu _{J}\int_{-\delta }^{0}f(W(x+r)) ^{-1}dr}\dfrac{W(x-\delta )}{f(W(x-\delta ))},\forall x\geqslant 0.  
\end{equation}
Based on the analysis of this equation (\ref{EQ1.2}), Smith \cite{Smith1, Smith2,  Smith3, Smith4} was able to prove the boundedness of solutions whenever $\delta >0$. Along the same line, he was also able to analyze the uniform persistence and Hopf bifurcation around the positive equilibrium. The result on boundedness of solutions for this case is as follows.
\begin{theorem}\label{TH1.2}
Let Assumption \ref{ASS1.1} be satisfied. Assume that $\tau _{0}>0$. Then for each $\varphi \geqslant 0$ and $\varphi \in X_{\alpha }$, the corresponding solution of equation (\ref{EQ1.1}) is bounded.
\end{theorem}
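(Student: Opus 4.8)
The plan is to argue by contradiction via a record-maximum (fluctuation) argument, with the hypothesis $\tau_0>0$ entering only through the fact that it forces
\[
\delta:=\int_{-\tau_0}^{0}f(\varphi(\sigma))\,d\sigma\geqslant \tau_0\min_{[-\tau_0,0]}f(\varphi)>0,
\]
since $f>0$. (An alternative route would be to pass to Smith's constant-delay reformulation \eqref{EQ1.2} through $x=\Phi(t)$, $W(x)=A(t)$, and invoke the boundedness analysis for \eqref{EQ1.2}; I prefer a direct argument on \eqref{EQ1.1}.) Throughout I would use that the solution stays nonnegative (immediate from $\varphi\geqslant0$ and $f>0$, as the right-hand side is $\geqslant0$ whenever $A$ vanishes) and is $C^1$ for $t>0$. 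First I would record three elementary facts. From $0\leqslant A(\sigma)$ and $f$ nonincreasing, $f(A(\sigma))\leqslant f(0)$, so the constraint gives $\delta\leqslant f(0)\,\tau(t)$, i.e. $\tau(t)\geqslant \delta/f(0)>0$. Dropping the nonnegative birth term yields $A'(t)\geqslant-\mu_A A(t)$, so $t\mapsto A(t)e^{\mu_A t}$ is nondecreasing and $A(\sigma)\geqslant A(s)e^{-\mu_A(\sigma-s)}$ for $\sigma\geqslant s\geqslant0$. Finally, differentiating the constraint gives $\frac{d}{dt}\big(t-\tau(t)\big)=f(A(t))/f(A(t-\tau(t)))>0$, so $t\mapsto t-\tau(t)$ is strictly increasing, starting from $-\tau_0$ at $t=0$.

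This monotonicity lets me dispose of the delay possibly reaching back into the initial history. Since $t-\tau(t)$ strictly increases from $-\tau_0<0$, there is a clean dichotomy. Either $t-\tau(t)\leqslant0$ for all $t\geqslant0$, in which case $A(t-\tau(t))=\varphi(t-\tau(t))$ with argument in the compact set $[-\tau_0,0]$, so the delayed term is bounded and $A'(t)\leqslant-\mu_A A(t)+\beta f(0)\max_{[-\tau_0,0]}\varphi\,/\,f\big(\max_{[-\tau_0,0]}\varphi\big)$; a linear comparison then bounds $A$ directly. Or $t-\tau(t)$ crosses $0$ at some finite $t^\ast$, and for all $t>t^\ast$ the whole delay interval $[t-\tau(t),t]$ lies in $[0,t]$, which is the regime where the record argument applies.

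So assume $A$ is unbounded; working past $t^\ast$, for each large level $n$ let $t_n$ be the first time $A(t_n)=n$, so that $A(\sigma)<n$ for $0\leqslant\sigma<t_n$, $t_n\to+\infty$, and $A'(t_n)\geqslant0$. Writing $B_n:=A(t_n-\tau(t_n))$ and evaluating \eqref{EQ1.1} at $t_n$,
\[
\mu_A n\leqslant \beta\,e^{-\mu_J\tau(t_n)}\,\frac{f(n)}{f(B_n)}\,B_n .
\]
Because $t_n-\tau(t_n)\in[0,t_n)$ lies in the record region, $B_n<n$, hence $f(B_n)\geqslant f(n)$ and the fraction is $\leqslant1$; the displayed inequality then forces $\beta>\mu_A$ (otherwise boundedness is already immediate), together with $\tau(t_n)\leqslant\bar\tau:=\mu_J^{-1}\ln(\beta/\mu_A)$ and $B_n\geqslant(\mu_A/\beta)\,n\to+\infty$. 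Feeding this back, the falloff estimate gives $A(\sigma)\geqslant B_n e^{-\mu_A\tau(t_n)}\geqslant B_n e^{-\mu_A\bar\tau}=:m_n\to+\infty$ for every $\sigma\in[t_n-\tau(t_n),t_n]$, so $f(A(\sigma))\leqslant f(m_n)$ there and
\[
\delta=\int_{t_n-\tau(t_n)}^{t_n}f(A(\sigma))\,d\sigma\leqslant \tau(t_n)\,f(m_n)\leqslant \bar\tau\,f(m_n).
\]
Since $m_n\to+\infty$ and $\lim_{x\to+\infty}f(x)=0$, the right-hand side tends to $0$, contradicting $\delta>0$; hence $A$ is bounded.

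The main obstacle is the chain of inequalities at the record times, and in particular keeping the record comparison legitimate, i.e. guaranteeing that the delay interval $[t_n-\tau(t_n),t_n]$ stays in $[0,\infty)$ rather than reaching into the uncontrolled initial history. The strict monotonicity of $t\mapsto t-\tau(t)$, obtained by differentiating the constraint, is exactly what makes this clean: it produces a single threshold $t^\ast$ and the dichotomy above, so no oscillation between the two regimes can occur. The remaining ingredients — nonnegativity, $C^1$-regularity giving $A'(t_n)\geqslant0$ at a first upcrossing, and the a priori lower bound on $\tau$ — are routine, and it is precisely $\delta>0$, which is where $\tau_0>0$ is used, that drives the final contradiction.
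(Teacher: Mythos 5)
Your argument is correct, and it shares the paper's overall skeleton --- the dichotomy based on the strict monotonicity of $t\mapsto t-\tau(t)$ (Lemmas \ref{LE2.1} and \ref{LE3.2}--\ref{LE3.3}), a first-hitting-time (record) argument with $A'\geqslant 0$ at the record time, and the lower bound $A(\sigma)\geqslant A(t-\tau(t))e^{-\mu_A(\sigma-t+\tau(t))}$ over the delay interval --- but it closes the contradiction in a genuinely different way. The paper's proof (following Case 2 of Lemma \ref{LE3.3}) turns the integral constraint into a \emph{lower} bound on the delay: it introduces the auxiliary solution $z(t)=me^{-\mu_A t}$ and the threshold $\tau_{m}$ defined by $\int_0^{\tau_m}f(z(\sigma))d\sigma=\delta$, proves $\tau_m\to+\infty$ as $m\to+\infty$, deduces $\tau(\tilde t)\geqslant\tau_m$ at the record time, and then uses the factor $e^{-\mu_J\tau(\tilde t)}\leqslant e^{-\mu_J\tau_m}$ to force $A'(\tilde t)<0$. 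You instead extract an \emph{upper} bound $\tau(t_n)\leqslant\bar\tau=\mu_J^{-1}\ln(\beta/\mu_A)$ directly from the sign condition (using $f(A(t_n))/f(B_n)\leqslant 1$, valid because $B_n<n$ at a first upcrossing and $f$ is nonincreasing), and then violate the constancy of $\delta=\int_{t_n-\tau(t_n)}^{t_n}f(A(\sigma))d\sigma\leqslant\bar\tau f(m_n)\to 0$. The two mechanisms are logically dual, and yours is cleaner for the scalar equation: it avoids the auxiliary construction and the limit (\ref{EQ3.3}) entirely. The price is that your key simplification $f(A(t_n))/f(A(t_n-\tau(t_n)))\leqslant 1$ exploits the fact that the argument of $f$ is the \emph{same} function $A$ being maximized; in the $n$-species system the ratio is $f_i(Z_i(t))/f_i(Z_i(t-\tau_i(t)))$ with $Z_i$ a weighted sum over all species, its ordering at a record time of $A_i$ is not controlled, and this is precisely why the paper needs the quantity $M_{f_i}$ and hypothesis (\ref{EQ1.5}) --- none of which, as the paper notes, is required in the single-species case you are proving. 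Your handling of the regime $t-\tau(t)\leqslant 0$ for all $t$ (direct linear comparison bound rather than the paper's proof that this regime is impossible) is also legitimate, since boundedness, not impossibility, is all the theorem requires.
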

\begin{remark}\label{RE1.3}
One may observe that the boundedness of solutions might not be true when $\tau _{0}=0$. Indeed, by the second equation of (\ref{EQ1.1}),
\begin{equation*}
\tau _{0}=0\Rightarrow \tau (t)=0,\forall t\geqslant 0,
\end{equation*}
and in this special case the first equation of (\ref{EQ1.1}) becomes linear:
\begin{equation}\label{EQ1.3}
A^{\prime }(t)=(\beta -\mu _{A}) A(t),\forall t\geqslant 0.
\end{equation}
The solution of (\ref{EQ1.3}) exists but when $\beta -\mu _{A}>0$, every strictly positive solution is unbounded.
\end{remark}

Consider now the following $n$-species model
\begin{equation}\label{EQ1.4}
\left\{ 
\begin{array}{l}
A_{i}^{\prime }(t)=-\mu _{A_{i}}A_{i}(t)+\beta _{i}e^{-\mu _{J_{i}}\tau _{i}(t)}\dfrac{f_{i}(Z_{i}(t))}{f_{i}(Z_{i}(t-\tau _{i}(t)))}A_{i}(t-\tau _{i}(t)),\forall t\geqslant 0, \\ 
\dint_{t-\tau _{i}(t)}^{t}f_{i}(Z_{i}(\sigma ))d\sigma =\dint _{-\tau _{i0}}^{0}f_{i}(Z_{i\varphi}(\sigma )) d\sigma ,\forall t\geqslant 0,
\end{array}
\right.   
\end{equation}
with the initial conditions
\begin{equation*}
A_{i}(t)=\varphi _{i}(t)\in X_{\alpha},\varphi _{i}(t)\geqslant 0,\forall t\leqslant 0\text{ and }\tau _{i}(0)=\tau _{i0}\geqslant 0,
\end{equation*}
where
\begin{equation*}
Z_{i}(t)=\sum _{j=1}^{n}\zeta _{ij}A_{j}(t),Z_{i\varphi}(t):=\sum _{j=1}^{n}\zeta _{ij}\varphi _{j}(t)
\end{equation*}
with $\zeta _{ij}\geqslant 0$, $i=1,\dots ,n$. We will use the following assumptions.

\begin{assumption}\label{ASS1.4}
We assume that $\forall i=1,\dots ,n$,

\begin{itemize}
\item[(i)]  The coefficients $\mu _{A_{i}}>0$, $\mu _{J_{i}}>0$, $\beta _{i}>0$ and $\zeta _{ii}>0$;
\item[(ii)] The function $f_{i}$ satisfies Assumption \ref{ASS1.1}-(ii) and 
\begin{equation} \label{EQ1.5}
\sup_{x\geqslant 0}\frac{f_{i}(x)}{f_{i}(cx)}<+\infty ,\forall c\geqslant 1.
\end{equation}
\end{itemize}
\end{assumption}

In this article, we will prove the following result for $n$-species model (\ref{EQ1.4}).
\begin{theorem}\label{TH1.5}
Let Assumption \ref{ASS1.4} be satisfied. Then for each nonnegative initial values $\varphi _{i}\geqslant 0$ and $\varphi _{i}\in X_{\alpha }$ and each $\tau _{i0}>0$, the corresponding solution of equation (\ref{EQ1.4}) is bounded.
\end{theorem}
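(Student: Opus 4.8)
The plan is to exploit the self-limitation hidden in the balance (second) equation of (\ref{EQ1.4}): when the population is large the growth rate $f_i(Z_i)$ is small, so the fixed quantity $\delta_i:=\int_{-\tau_{i0}}^0 f_i(Z_{i\varphi}(\sigma))\,d\sigma$ can only be accumulated over a long interval, which forces the maturation delay $\tau_i(t)$ to be large and hence the recruitment factor $e^{-\mu_{J_i}\tau_i(t)}$ to be exponentially small. This is exactly the mechanism that disappears when $\tau_{i0}=0$ (Remark \ref{RE1.3}), and the hypothesis $\tau_{i0}>0$ is used only to guarantee $\delta_i>0$. As preliminaries I would first record that nonnegativity of the $\varphi_i$ and the sign structure of (\ref{EQ1.4}) give $A_i(t)\ge 0$, hence $Z_i(t)\ge 0$, for all $t$; then differentiate the balance equation to obtain $1-\tau_i'(t)=f_i(Z_i(t))/f_i(Z_i(t-\tau_i(t)))>0$, so that $t\mapsto t-\tau_i(t)$ is strictly increasing with an increasing inverse $\theta_i$ (the maturation time of the cohort born at a given instant), and rewrite the first equation as $A_i'(t)=-\mu_{A_i}A_i(t)+\beta_i e^{-\mu_{J_i}\tau_i(t)}\bigl(1-\tau_i'(t)\bigr)A_i(t-\tau_i(t))$.

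Next come the two key estimates. Since $f_i$ is decreasing with $0<f_i\le f_i(0)$ on $[0,\infty)$, the balance equation yields both the uniform lower bound $\tau_i(t)\ge \delta_i/f_i(0)>0$ and, more importantly, the inflation estimate $\tau_i(t)\ge \delta_i/f_i\!\bigl(\inf_{[t-\tau_i(t),t]}Z_i\bigr)$, whose right-hand side tends to $+\infty$ as the population over the delay window grows, because $f_i(x)\to 0$. I would then integrate the rewritten first equation over $[0,T]$ and substitute $\xi=t-\tau_i(t)$; since $d\xi=(1-\tau_i'(t))\,dt$, the troublesome ratio $f_i(Z_i(t))/f_i(Z_i(t-\tau_i(t)))$ is absorbed into the Jacobian, and the recruitment term becomes $\int_{-\tau_{i0}}^{\,T-\tau_i(T)}\beta_i\,e^{-\mu_{J_i}\tau_i(\theta_i(\xi))}A_i(\xi)\,d\xi$, a clean superposition of the birth rate against the maturation--survival weight $e^{-\mu_{J_i}\tau_i(\theta_i(\xi))}$, with $\tau_i(\theta_i(\xi))=\theta_i(\xi)-\xi$.

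Boundedness then follows by a threshold argument. Because $Z_i\ge \zeta_{ii}A_i$ with $\zeta_{ii}>0$, largeness of $A_i$ forces largeness of $Z_i$ on the relevant windows, and the coupling is benign: the remaining nonnegative terms $\zeta_{ij}A_j$ only make $f_i(Z_i)$ smaller and $\tau_i$ larger, i.e. they only strengthen the suppression. Using (\ref{EQ1.5}) to compare $f_i(Z_i)$ with $f_i$ evaluated at the single scale $\zeta_{ii}A_i$ (the scale factor being a fixed constant built from the $\zeta_{ij}$), I would bound the recruitment weight on high-population cohorts by the contraction $\beta_i\,e^{-\mu_{J_i}\delta_i/f_i(m)}$ and choose the level $m$ so large that this is strictly smaller than $\mu_{A_i}$. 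Feeding this back into the integral identity shows that, outside an initial boundary layer of width comparable to the delay, $A_i$ cannot remain above a computable level; hence each $A_i$, and therefore each $Z_i$, is bounded.

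The main obstacle is to make this self-limitation quantitative and uniform in $i$ in the presence of the coupling and of possibly long delays: one must control the recruitment contribution of cohorts whose maturation window straddles the transition between moderate and large population, and it is precisely here that condition (\ref{EQ1.5}) is essential, since it is what allows the inequalities at the coupled argument $Z_i$ to be transferred to a single rescaled variable on which the scalar inflation estimate applies. Arranging the estimates so that the threshold $m$ can be fixed once and for all, simultaneously for all $n$ species, is the delicate point of the argument.
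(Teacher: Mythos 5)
Your mechanism is exactly the one the paper uses: a large population makes $f_i(Z_i)$ small, so the normalization $\int_{t-\tau_i(t)}^{t}f_i(Z_i(\sigma))\,d\sigma=\delta_i>0$ forces $\tau_i(t)$ to be large, the factor $e^{-\mu_{J_i}\tau_i(t)}$ then beats $\beta_i/\mu_{A_i}$, and condition (\ref{EQ1.5}) is what lets you replace the coupled argument $Z_i$ by the single scale $\zeta_{ii}A_i$; the first-crossing (threshold) argument is also the paper's. But your key quantitative step, the ``inflation estimate'' $\tau_i(t)\geqslant \delta_i/f_i\bigl(\inf_{[t-\tau_i(t),t]}Z_i\bigr)$, is circular as stated: at the first time $\tilde t$ where $A_1$ reaches the threshold $K$ you only control $A_1(\tilde t)$, not the infimum of $Z_1$ over the whole window $[\tilde t-\tau_1(\tilde t),\tilde t]$, and if $\tau_1(\tilde t)$ were large that infimum could be small, while if it is small you have concluded nothing. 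The paper closes this loop with two extra moves you do not supply: first, the sign condition $A_1'(\tilde t)\geqslant 0$ together with (\ref{EQ1.5}) forces the \emph{delayed} value $A_1(\tilde t-\tau_1(\tilde t))$ to exceed a level $m_1$ that tends to $+\infty$ with $K$; second, the one-sided inequality $A_1'\geqslant-\mu_{A_1}A_1$ propagates this forward, giving $A_1(\sigma)\geqslant m_1e^{-\mu_{A_1}(\sigma-\tilde t+\tau_1(\tilde t))}$ on the window, so that the normalization yields $\tau_1(\tilde t)\geqslant\tau_{1,m_1}$ where $\tau_{1,m_1}$ is defined by $\int_0^{\tau_{1,m_1}}f_1(\zeta_{11}m_1e^{-\mu_{A_1}\sigma})\,d\sigma=1$ and $\tau_{1,m_1}\to+\infty$ as $m_1\to+\infty$. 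This comparison with the pure-decay solution, not the infimum over the window, is what makes the delay lower bound non-circular; your dichotomy could be repaired along these lines, but that repair is the actual content of the proof.

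The second gap is the initial layer. Your comparison $A_1'\geqslant-\mu_{A_1}A_1$ (and the recruitment estimate at the delayed time) is only usable when $\tilde t-\tau_1(\tilde t)\geqslant 0$, i.e.\ when the window has left the region where $A_1$ is prescribed initial data; you acknowledge a ``boundary layer'' but give no argument. The paper devotes Lemma \ref{LE3.3} to this: if $t-\tau_i(t)\leqslant 0$ for all $t$, then the delayed term is built from the initial data and comparison with $\hat A_i'=-\mu_{A_i}\hat A_i+\Gamma_if_i(\zeta_{ii}\hat A_i)$ bounds $A_i$, whence $1=\int_{t-\tau_i(t)}^{t}f_i(Z_i)\geqslant t\,f_i(\sum_j\zeta_{ij}\hat A_j^{\ast})$ fails for large $t$; and the mixed case in which only some of the crossing times $t_i^{\ast}$ are finite (which can genuinely occur since some species may be uncoupled, $\zeta_{ij}=0$ for $j\neq i$) requires its own treatment. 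Without these two pieces the argument does not close.
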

\begin{remark} 
The proof of Theorem \ref{TH1.2} (single species case) uses a similar argument as the proof of Theorem \ref{TH1.5} ($n$-species case), which will be presented in Section 3. But for the single species case, the condition (\ref{EQ1.5}) in Assumption \ref{ASS1.4} is no longer needed.  
\end{remark}
\begin{remark} 
For the $n$-species case we can no longer use the change of variable employed by Smith in \cite{Smith1, Smith2} since the delays $\tau_i(t)$ are different in general. Nevertheless, in this article we show that the arguments employed to prove the boundedness of solutions and the dissipativity in \cite{Smith1, Smith2} can be adapted to the $n$-species case.  
\end{remark}
\begin{remark} 
It is necessary to assume that $\tau _{i0}>0$ because we possibly have 
\[
\zeta _{ij}=0, \forall i\neq j.
\]
Hence it is necessary to assume that in the case of species without coupling, the solution is bounded.  
\end{remark}

State-dependent delay differential equations have been used by several authors to describe the stage-structured population dynamics. We refer to \cite{Aiello, Alomari, Arino, Hartung, Hbid, Kloosterman} for more results on this topic. We also refer to Walther \cite{Walther} for a very general analysis of the semiflow generated by state-dependent delay differential equations. 

The paper is organized as follows. In Section 2 we will present some results about the delay $\tau(t)$. In Section 3 we will prove the boundedness of solutions of the $n$-species model (\ref{EQ1.4}) without using the change of variable. In Section 4 we prove a dissipativity result for such a system. 

\section{Properties of the integral equation for $\tau(t)$}

For simplicity, we focus on the single species model (\ref{EQ1.1}) in this section. The same result can be similarly deduced for the $n$-species model (\ref{EQ1.4}). We have the following lemma of the equivalence of the integral equation for $\tau(t)$ and an ordinary differential equation.
\begin{lemma}\label{LE2.1} Let $A:(-\infty,r) \to \mathbb{R}$ be a given continuous function with $r > 0$. Then there exists a uniquely determined function $\tau:[0,r)\rightarrow [0,+\infty)$ satisfying 
\begin{equation}  \label{EQ2.1}
\int_{t-\tau (t)}^{t}f(A(\sigma))d\sigma =\int_{-\tau _{0}}^{0}f(\varphi(\sigma ))d\sigma ,\forall t\in [0,r).
\end{equation}
Moreover this uniquely determined function $t\mapsto \tau (t)$ is
continuously differentiable and satisfies the ordinary differential equation 
\begin{equation}  \label{EQ2.2}
\tau ^{\prime }(t)=1-\frac{f(A(t))}{f(A(t-\tau (t)))},\forall t\in [0,r),%
\text{ and }\tau (0)=\tau _{0}.
\end{equation}
Conversely if $t\mapsto \tau(t)$ is a $C^{1}$ function satisfying the above
ordinary differential equation (\ref{EQ2.2}), then it also satisfies the
above integral equation (\ref{EQ2.1}).
\end{lemma}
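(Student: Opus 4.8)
The plan is to exploit the strict positivity of $f$ (Assumption \ref{ASS1.1}) to reduce the implicit integral relation to the inversion of a strictly monotone function. Define
\begin{equation*}
G(t):=\int_{0}^{t}f(A(\sigma))\,d\sigma,\qquad t\in(-\infty,r).
\end{equation*}
Since $\sigma\mapsto f(A(\sigma))$ is continuous and strictly positive, $G$ is $C^{1}$ with $G'(t)=f(A(t))>0$; hence $G$ is a strictly increasing homeomorphism of $(-\infty,r)$ onto an interval $(a,b)$, and its inverse $G^{-1}$ is $C^{1}$ by the inverse function theorem. Writing $\delta:=\int_{-\tau_{0}}^{0}f(\varphi(\sigma))\,d\sigma\geqslant 0$ as before, and using that $A$ extends the initial datum, $A|_{(-\infty,0]}=\varphi$, the integral equation (\ref{EQ2.1}) becomes simply
\begin{equation*}
G(t)-G(t-\tau(t))=\delta,\qquad t\in[0,r).
\end{equation*}

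Next I would solve this for $\tau$. Because $G$ is injective, the displayed relation is equivalent to $t-\tau(t)=G^{-1}(G(t)-\delta)$, that is,
\begin{equation*}
\tau(t)=t-G^{-1}\!\big(G(t)-\delta\big),
\end{equation*}
which yields existence and uniqueness at once, provided the right-hand side is well defined. To check this I would verify that $G(t)-\delta$ lies in the range of $G$ for every $t\in[0,r)$: the identity $A|_{[-\tau_{0},0]}=\varphi$ gives $G(-\tau_{0})=-\delta$, so monotonicity yields $G(t)-\delta\geqslant G(0)-\delta=G(-\tau_{0})$ and $G(t)-\delta\leqslant G(t)<b$, whence $G(t)-\delta\in[G(-\tau_{0}),b)\subseteq(a,b)$ and $t-\tau(t)\in[-\tau_{0},r)$. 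The same inequalities show $G(t-\tau(t))=G(t)-\delta\leqslant G(t)$, so $t-\tau(t)\leqslant t$ and $\tau(t)\geqslant 0$, confirming that $\tau$ takes values in $[0,+\infty)$. Regularity is then immediate: $\tau$ is $C^{1}$ as a difference and composition of $C^{1}$ maps.

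To obtain the ODE I would differentiate $G(t)-G(t-\tau(t))=\delta$ and use $G'=f(A)$, which gives $f(A(t))-f(A(t-\tau(t)))(1-\tau'(t))=0$; dividing by $f(A(t-\tau(t)))>0$ produces exactly (\ref{EQ2.2}), while evaluating the original relation at $t=0$ together with $G(-\tau_{0})=-\delta$ and the injectivity of $G$ forces $\tau(0)=\tau_{0}$. For the converse, given a $C^{1}$ solution $\tau$ of (\ref{EQ2.2}), I would set $H(t):=G(t)-G(t-\tau(t))$ and compute $H'(t)=f(A(t))-f(A(t-\tau(t)))(1-\tau'(t))=0$ by the ODE, so $H$ is constant; since $H(0)=\int_{-\tau_{0}}^{0}f(\varphi(\sigma))\,d\sigma=\delta$, we conclude $H\equiv\delta$, which is (\ref{EQ2.1}).

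The only genuinely delicate point is the well-definedness step: one must ensure that the backward argument $t-\tau(t)$ never leaves the interval $(-\infty,r)$ on which $A$, and hence $G$, is defined, equivalently that $G(t)-\delta$ stays in the range of $G$. This is where both $\delta\geqslant 0$ (keeping us below $G(t)$, hence below $b$) and the anchoring identity $G(-\tau_{0})=-\delta$ coming from $A|_{(-\infty,0]}=\varphi$ (keeping us above $G(-\tau_{0})>a$) are used; everything else is a routine application of monotonicity and the inverse function theorem.
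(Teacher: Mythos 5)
Your proof is correct and follows essentially the same route as the paper: both reduce the integral equation to inverting the strictly increasing antiderivative of $f(A)$ (you write this explicitly as $\tau(t)=t-G^{-1}(G(t)-\delta)$, the paper phrases it via the intermediate value theorem plus the implicit function theorem), and both obtain the converse by showing that $t\mapsto\int_{t-\tau(t)}^{t}f(A(\sigma))\,d\sigma$ is constant. Your packaging via $G$ and $G^{-1}$ is slightly cleaner but mathematically identical.
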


\begin{remark}
By using equation (\ref{EQ2.2}), it is easy to check that 
\begin{equation*}
\tau _{0}>0\Rightarrow \tau (t)>0,\forall t\in [0,r)
\end{equation*}
and 
\begin{equation*}
\tau _{0}=0 \Rightarrow \tau (t)=0,\forall t\in [0,r).
\end{equation*}
\end{remark}

\begin{proof}
Let $t\in [0,r]$. Since by Assumption \ref{ASS1.1}, $f$ is strictly positive, then by considering the function $\displaystyle\tau\mapsto\int_{t-\tau }^{t}f(A(s))ds$, and observing that 
\begin{equation*}
\int_{t-0}^{t}f(A(s))ds=0<\int_{-\tau _{0}}^{0}f(\varphi (s))ds\text{ and }%
\int_{t-(t+\tau _{0})}^{t}f(A(s))ds\geqslant \int_{-\tau _{0}}^{0}f(\varphi (s))ds,
\end{equation*}
it follows by the intermediate value theorem that there exists a unique $\tau (t)\in [0,t+\tau _{0}]$.

By applying the implicit function theorem to the map $\psi:(t,\gamma) \mapsto \dint_{\gamma }^{t}f(A(s))ds$ (which is possible since $\displaystyle\frac{\partial\psi}{\partial\gamma}=-f(A(\gamma))$ and by Assumption \ref{ASS1.1}, $f$ is strictly positive), we deduce that $t\mapsto t-\tau (t)$ is continuously differentiable, and by computing the derivative with respect to $t$ on both sides of (\ref{EQ2.1}), we deduce that $\tau (t)$ is a solution of (\ref{EQ2.2}).

Conversely, assume that $\tau (t)$ is a solution of (\ref{EQ2.2}). Then 
\begin{equation*}
f(A(t))=(1-\tau ^{\prime }(t)) f(A(t-\tau (t))),\forall t\in [0,r).
\end{equation*}
Integrating both sides with respect to $t$, we have 
\begin{equation*}
\int_{0}^{t}f(A(s))ds=\int_{0}^{t}f(A(s-\tau (s)))\left( 1-\tau ^{\prime
}(s)\right) ds.
\end{equation*}
Make the change of variable $l=s-\tau (s)$, we have $\forall t\in [0,r)$, 
\begin{equation*}
\begin{split}
& \int_{0}^{t}f(A(s))ds=\int_{-\tau _{0}}^{t-\tau (t)}f(A(l))dl \\
\Leftrightarrow & \int_{t-\tau (t)}^{t}f(A(s))ds+\int_{0}^{t-\tau
(t)}f(A(s))ds=\int_{-\tau _{0}}^{t-\tau (t)}f(A(s))ds \\
\Leftrightarrow & \int_{t-\tau (t)}^{t}f(A(s))ds=\int_{-\tau _{0}}^{t-\tau
(t)}f(A(s))ds-\int_{0}^{t-\tau (t)}f(A(s))ds,
\end{split}%
\end{equation*}
this implies that $\tau(t)$ also satisfies the equation (\ref{EQ2.1}).
\end{proof}

In order to see that the delay $\tau (t)$ is a functional of $A_{t} \in X_\alpha$ which is defined as
$$
A_{t}(\theta):=A(t+\theta),\forall \theta \leq 0,
$$
we define the following functional. For any constant $C>0$, we define the map $\widehat{\tau}:D(\widehat{\tau})\subset C(-\infty ,0]\times [0,+\infty )\rightarrow [0,+\infty )$ as the solution of 
\begin{equation}\label{EQ2.3}
\int_{-\widehat{\tau}(\phi ,C)}^{0}f(\phi (s))ds=C  
\end{equation}
and its domain 
\begin{equation*}
D(\widehat{\tau})=\left\{( \phi ,C) \in C((-\infty ,0])\times [0,+\infty ):C<\int_{-\infty }^{0}f(\phi (s))ds\right\} .
\end{equation*}
\begin{lemma}\label{LE2.3} 
Set $\displaystyle C_{0}:=\int_{-\tau _{0}}^{0}f(\varphi (s))ds$, then we have the following relation 
\begin{equation*}
\widehat{\tau}(A_{t},C_{0})=\tau (t),\forall t\in (0,r),
\end{equation*}
where $\tau (t)$ is the solution of (\ref{EQ2.1}).
\end{lemma}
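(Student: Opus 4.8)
The plan is to reduce the defining relation for $\widehat{\tau}(A_{t},C_{0})$ to the integral equation (\ref{EQ2.1}) by a single change of variable, and then to invoke the uniqueness already established in Lemma \ref{LE2.1}. The point is simply that the functional $\widehat{\tau}$ and the delay $\tau(t)$ are solutions of one and the same scalar equation once the shift $A_{t}(s)=A(t+s)$ is unwound.

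The key computation is the change of variable. By Lemma \ref{LE2.1} the delay $\tau(t)$ is the unique number in $[0,t+\tau_{0}]$ satisfying
\[
\int_{t-\tau(t)}^{t}f(A(\sigma))\,d\sigma = C_{0}.
\]
Substituting $\sigma=t+s$ (so that $\sigma=t$ corresponds to $s=0$ and $\sigma=t-\tau(t)$ to $s=-\tau(t)$) and using $A_{t}(s)=A(t+s)$, this becomes
\[
\int_{-\tau(t)}^{0}f(A_{t}(s))\,ds = C_{0},
\]
which is exactly the equation (\ref{EQ2.3}) defining $\widehat{\tau}(A_{t},C_{0})$. Along the way this also settles membership in $D(\widehat{\tau})$: since $\tau(t)$ is finite and $f>0$ by Assumption \ref{ASS1.1}, one has
\[
\int_{-\infty}^{0}f(A_{t}(s))\,ds = C_{0}+\int_{-\infty}^{-\tau(t)}f(A_{t}(s))\,ds > C_{0},
\]
because the last integral is that of a strictly positive function over an interval of infinite length; hence $(A_{t},C_{0})\in D(\widehat{\tau})$ and $\widehat{\tau}(A_{t},C_{0})$ is well defined.

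It remains to conclude by uniqueness. Both $\widehat{\tau}(A_{t},C_{0})$ and $\tau(t)$ satisfy $\int_{-\gamma}^{0}f(A_{t}(s))\,ds = C_{0}$, and the left-hand side is a strictly increasing function of $\gamma\geqslant 0$ because its derivative $f(A_{t}(-\gamma))=f(A(t-\gamma))$ is strictly positive; therefore the equation has at most one solution, and $\widehat{\tau}(A_{t},C_{0})=\tau(t)$ for all $t\in(0,r)$. I do not expect a genuine obstacle here; the only points requiring care are verifying domain membership and appealing to the strict monotonicity (equivalently, the intermediate-value uniqueness of Lemma \ref{LE2.1}) rather than taking it for granted.
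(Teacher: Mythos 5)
Your proof is correct and takes essentially the same route as the paper's: the change of variable $\sigma=t+s$ identifies the equation defining $\widehat{\tau}(A_{t},C_{0})$ with the integral equation (\ref{EQ2.1}) for $\tau(t)$, and strict positivity of $f$ gives uniqueness. The paper's proof is just the one-line identity $\int_{-\widehat{\tau}(A_{t},C_{0})}^{0}f(A_{t}(s))ds=\int_{t-\widehat{\tau}(A_{t},C_{0})}^{t}f(A(s))ds=C_{0}$; your explicit verification of membership in $D(\widehat{\tau})$ and of uniqueness merely spells out what the paper leaves implicit.
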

\begin{proof}
It is sufficient to observe that 
\begin{equation*}
\int_{-\widehat{\tau}(A_{t},C_{0})}^{0}f(A_{t}(s))ds=\int_{t-\widehat{\tau}(A_{t},C_{0})}^{t}f(A(s))ds=C_{0}.
\end{equation*}
\end{proof}
\section{Boundedness of solutions for $n$-species case}

In this section we will investigate the boundedness of a trajectory of system (\ref{EQ1.4}) with the initial conditions satisfying 
\[
\dint _{-\tau _{i0}}^{0}f_{i}(Z_{i\varphi}(\sigma )) d\sigma>0, \forall i=1,...,n. 
\]
Without loss of generality, we can assume that
\begin{assumption} \label{ASS3.1}
\begin{equation*}
\dint _{-\tau _{i0}}^{0}f_{i}(Z_{i\varphi}(\sigma )) d\sigma =1, \forall i=1,...,n. 
\end{equation*}
\end{assumption}
We have the following lemma from \cite{Magal2}.
\begin{lemma}\label{LE3.2}
Let Assumptions \ref{ASS1.4} and \ref{ASS3.1} be satisfied. Then the functions $t-\tau _{i}(t)$ are strictly increasing with respect to $t$, $\forall i=1,\dots ,n$.
\end{lemma}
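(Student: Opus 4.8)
The plan is to reduce the claim to a one-line sign computation based on the ordinary differential equation satisfied by each delay $\tau_i$. By the $n$-species analogue of Lemma \ref{LE2.1} (the text notes at the start of Section 2 that the same result holds for system (\ref{EQ1.4})), each map $t\mapsto\tau_i(t)$ is continuously differentiable on its interval of existence and satisfies
\begin{equation*}
\tau_i^{\prime}(t)=1-\frac{f_i(Z_i(t))}{f_i(Z_i(t-\tau_i(t)))},\quad\forall i=1,\dots,n.
\end{equation*}
First I would differentiate $t\mapsto t-\tau_i(t)$, which is legitimate precisely because Lemma \ref{LE2.1} guarantees that this map is $C^1$; substituting the ODE gives
\begin{equation*}
\frac{d}{dt}\bigl(t-\tau_i(t)\bigr)=1-\tau_i^{\prime}(t)=\frac{f_i(Z_i(t))}{f_i(Z_i(t-\tau_i(t)))}.
\end{equation*}

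The decisive step is then to observe that this quantity is strictly positive. By Assumption \ref{ASS1.4}-(ii), each $f_i$ inherits from Assumption \ref{ASS1.1}-(ii) the property $f_i:\mathbb{R}\to(0,+\infty)$, so both the numerator and the denominator are strictly positive for every argument; in particular the denominator never vanishes and the ratio is well-defined. Hence $\frac{d}{dt}(t-\tau_i(t))>0$ throughout the interval, and since a $C^1$ function with everywhere strictly positive derivative on an interval is strictly increasing, the claim follows for each $i=1,\dots,n$.

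There is essentially no deep obstacle here; the only two points requiring care are organisational. The first is to verify that the ODE characterisation of Lemma \ref{LE2.1} genuinely transfers to the coupled system (\ref{EQ1.4}), where $\tau_i$ now depends on the linear combination $Z_i=\sum_{j}\zeta_{ij}A_j$ rather than on $A_i$ alone — but the derivation uses only the continuity of $\sigma\mapsto f_i(Z_i(\sigma))$ and the strict positivity of $f_i$, both of which remain valid. The second is to confirm that Assumptions \ref{ASS1.4} and \ref{ASS3.1} place us in the regime where $\tau_i$ is well-defined and $C^1$, Assumption \ref{ASS3.1} normalising the constant $C_0=1>0$ so that $(A_{it},C_0)\in D(\widehat{\tau})$ and $\tau_i$ stays positive. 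I would emphasise that neither the monotonicity condition (\ref{EQ1.5}) nor the sign of $f_i^{\prime}$ plays any role in this particular statement: strict positivity of $f_i$ alone drives the conclusion.
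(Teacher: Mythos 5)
Your proof is correct: differentiating the integral identity via the $n$-species analogue of Lemma \ref{LE2.1} gives $\frac{d}{dt}\bigl(t-\tau_i(t)\bigr)=f_i(Z_i(t))/f_i(Z_i(t-\tau_i(t)))>0$, and strict positivity of $f_i$ is indeed all that is needed. The paper itself states this lemma without proof (citing \cite{Magal2}), but your argument is exactly the one suggested by the ODE (\ref{EQ2.2}) and the remark following Lemma \ref{LE2.1}, so it is the intended approach.
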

Next we will prove the following result.
\begin{lemma}\label{LE3.3}
Let Assumptions \ref{ASS1.4} and \ref{ASS3.1} be satisfied. Then there exists a positive real number  $t_{i}^{\ast }>0$ such that $t_{i}^{\ast }-\tau _{i}(t_{i}^{\ast })=0$, $\forall i=1,\dots ,n$.
\end{lemma}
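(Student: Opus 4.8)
The plan is to show that each function $t\mapsto t-\tau_i(t)$, which starts strictly below zero, is forced up through the value $0$ in finite time. Write $g_i(t):=t-\tau_i(t)$. By Lemma \ref{LE2.1} in its $n$-species form, $g_i$ is $C^1$ with $g_i'(t)=f_i(Z_i(t))/f_i(Z_i(t-\tau_i(t)))>0$, so $g_i$ is strictly increasing (as recorded in Lemma \ref{LE3.2}), and $g_i(0)=-\tau_{i0}<0$. Hence, by the intermediate value theorem, it suffices to exhibit a time at which $g_i\geqslant 0$, and the crossing point is then automatically unique. Equivalently, using the normalised integral equation $\int_{t-\tau_i(t)}^{t}f_i(Z_i(\sigma))d\sigma=1$ provided by Assumption \ref{ASS3.1}, it suffices to show that $P_i(t):=\int_0^t f_i(Z_i(\sigma))d\sigma$ attains the value $1$: indeed, while $g_i(t)\leqslant 0$ one has $P_i(t)=1-\int_{g_i(t)}^{0}f_i(Z_{i\varphi}(\sigma))d\sigma$, so $g_i(t)=0$ exactly when $P_i(t)=1$.

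The engine of the argument is an a priori bound valid as long as $g_i(t)<0$. In that regime $t-\tau_i(t)=g_i(t)\in[-\tau_{i0},0)$, so the delayed arguments in the $i$-th equation of (\ref{EQ1.4}) lie in the initial interval, where $Z_{i\varphi}$ takes values in a compact set on which $f_i\geqslant m_i>0$ and $\varphi_i\leqslant\Phi_i$. Combining this with $f_i(Z_i(t))\leqslant f_i(0)$ (valid since $Z_i\geqslant 0$ and $f_i$ is nonincreasing) and $e^{-\mu_{J_i}\tau_i(t)}\leqslant 1$, the birth term is bounded by a constant $S_i:=\beta_i f_i(0)\Phi_i/m_i$, so that $A_i'(t)\leqslant -\mu_{A_i}A_i(t)+S_i$ and therefore $A_i(t)\leqslant B_i:=\max\{\varphi_i(0),S_i/\mu_{A_i}\}$. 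The key feature is that this bound is species-local: it uses only the $i$-th data and nothing about the sizes of the other populations.

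To push $P_i$ upward I then need a positive lower bound for $f_i(Z_i)$, that is, an upper bound for $Z_i=\sum_j\zeta_{ij}A_j$, and this is where the coupling enters. Up to the first crossing time $T_1:=\min_j\inf\{t:g_j(t)=0\}$ every species is still in the regime above, so $A_j\leqslant B_j$, hence $Z_i\leqslant\bar Z_i:=\sum_j\zeta_{ij}B_j$ and $f_i(Z_i)\geqslant c_i:=f_i(\bar Z_i)>0$; thus $P_i(t)\geqslant c_i t$ for $t<T_1$. Since $P_i(t)<1$ there, this forces $T_1\leqslant\min_i c_i^{-1}<\infty$, so the first crossing occurs in finite time and the lemma holds for whichever species realises the minimum. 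To obtain $t_i^{\ast}$ for every $i$ I would iterate over the finitely many crossings, at each stage using that the solution stays bounded on the finite interval already constructed to produce a positive lower bound for $f_i(Z_i)$ for the not-yet-crossed species, thereby forcing the next crossing in finite time.

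The main obstacle is exactly this last step: once $g_j$ has passed $0$, the delayed term of the $j$-th equation samples the solution itself rather than the bounded initial data, and a priori $A_j$ (hence $Z_i$) could grow so fast that $f_i(Z_i)$ remains too small for $P_i$ to reach $1$. Controlling this growth is the delicate part, and it is where I expect to use the finiteness of the number of species together with Assumption \ref{ASS1.4}(ii), in particular the ratio condition (\ref{EQ1.5}), to rule out a finite-time escape before all crossings have taken place. A convenient way to package the conclusion, once $Z_i$ is known to be controlled on the relevant interval, is by contradiction: if $g_i(t)<0$ for all $t$ in the maximal interval of existence, then $A_i\leqslant B_i$ throughout, the denominator $f_i(Z_{i\varphi}(g_i(t)))$ stays below a constant $M_i$, and $g_i'(t)=f_i(Z_i(t))/f_i(Z_{i\varphi}(g_i(t)))\geqslant c_i/M_i>0$, which forces $g_i(t)\to+\infty$ and contradicts $g_i<0$.
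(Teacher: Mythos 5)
Your reduction to showing that $P_i(t)=\int_0^t f_i(Z_i(\sigma))\,d\sigma$ reaches the value $1$, and your treatment of the regime in which no species has yet crossed, are sound and coincide in substance with the first half of the paper's argument (its ``Case 1''): while every delayed argument still lies in the initial interval, the birth term is controlled by the initial data, a scalar comparison bounds every $A_j$, hence $f_i(Z_i)$ is bounded below by a positive constant and the normalisation $\int_{t-\tau_i(t)}^{t}f_i(Z_i(\sigma))\,d\sigma=1$ forces the first crossing to occur in finite time.

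The genuine gap is the step you yourself flag as ``the main obstacle'': proving that a species $A_k$ whose delayed argument has already entered $[0,+\infty)$ remains bounded, so that the not-yet-crossed species are still forced to cross. This is not a deferrable technicality; it is the heart of the lemma and occupies most of the paper's proof (its ``Case 2''), and it is precisely where hypothesis (\ref{EQ1.5}) is used --- not in the pre-crossing regime. The mechanism your proposal lacks is the following. Introduce $z_k(t)=m_k e^{-\mu_{A_k}t}$ and define $\tau_{k,m_k}$ by $\int_0^{\tau_{k,m_k}}f_k(\zeta_{kk}z_k(\sigma))\,d\sigma=1$; one checks that $\tau_{k,m_k}\to+\infty$ as $m_k\to+\infty$, so $m_k$ can be fixed with $-\mu_{A_k}+\beta_k e^{-\mu_{J_k}\tau_{k,m_k}}M_{f_k}\bigl((\zeta_{k1}+\dots+\zeta_{kn})/\zeta_{kk}\bigr)<0$, where $M_{f_k}$ is the supremum in (\ref{EQ1.5}). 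Then, at the first time $\tilde t$ at which some already-crossed $A_k$ reaches a large level $K$, one has $A_k'(\tilde t)\geqslant 0$; via (\ref{EQ1.5}) this yields a lower bound on $A_k(\tilde t-\tau_k(\tilde t))$ of order $K$, the comparison $A_k'\geqslant-\mu_{A_k}A_k$ gives $A_k\geqslant z_k(\cdot-\tilde t+\tau_k(\tilde t))$ on $[\tilde t-\tau_k(\tilde t),\tilde t]$, the normalisation integral then forces $\tau_k(\tilde t)\geqslant\tau_{k,m_k}$, and the sign condition above yields $A_k'(\tilde t)<0$, a contradiction. Without this (or an equivalent) a priori bound after a crossing, your iteration over successive crossings cannot take its second step, and your final ``packaging'' paragraph is conditional on exactly the estimate that is missing.
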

\begin{proof}
We define
\begin{equation*}
t_{i}^{\ast }:=\sup\{t\geqslant 0:s-\tau _{i}(s)\leqslant 0,\forall s\in[0,t]\},i=1,\dots ,n.
\end{equation*}
\textbf{Case 1:} We assume that all the elements of $\{t_{i}^{\ast }\}_{i=1}^{n}$ are infinite, and we will prove that this is not possible. By the above definition of $t_{i}^{\ast}$, we have $\forall t\geqslant 0$, $t-\tau _{i}(t)\leqslant 0$, or precisely, 
\begin{equation*}
t-\tau _{i}(t)\in [-\tau _{i0},0].
\end{equation*}
Then the equation for $A_{i}(t)$ becomes 
\begin{equation*}
A_{i}^{\prime }(t)=-\mu _{A_{i}}A_{i}(t)+\beta _{i}e^{-\mu _{J_{i}}\tau
_{i}(t)}\dfrac{f_{i}(Z_{i}(t))}{f_{i}(Z_{i\varphi}(t-\tau _{i}(t)))}\varphi
_{i}(t-\tau _{i}(t)),\forall t\geqslant 0.
\end{equation*}
We set 
\begin{equation*}
\Gamma _{i}:=\beta _{i}\sup_{t\in \lbrack -\tau _{i0},0]}\frac{\varphi _{i}(t)}{f_{i}(Z_{i\varphi}(t))}>0.
\end{equation*}%
Since $f_{i}(Z_{i}(t))\leqslant f_{i}(\zeta _{ii}A_{i}(t)),\forall t\geqslant 0$, then by the comparison principle, we have $A_{i}(t)\leqslant \hat{A}_{i}(t)$, $\forall t\geqslant 0$, where $\hat{A}_{i}(t)$ is the solution of 
\begin{equation*}
\left\{ 
\begin{array}{l}
\hat{A}_{i}^{\prime }(t)=-\mu _{A_{i}}\hat{A}_{i}(t)+\Gamma _{i}f_{i}(\zeta _{ii}\hat{A}_{i}(t))=:g_{\Gamma _{i}}(\hat{A}_{i}(t)),\forall t\geqslant 0, \\ 
\hat{A}_{i}(0)=\varphi _{i}(0)\geqslant 0.
\end{array}
\right.
\end{equation*}
As $g_{\Gamma _{i}}(\hat{A}_{i})$ is decreasing with $\hat{A}_{i}$ and we have 
\begin{equation*}
g_{\Gamma _{i}}(0)=\Gamma _{i}f_{i}(0)>0,\lim_{\hat{A}_{i}\rightarrow +\infty }g_{\Gamma _{i}}(\hat{A}_{i})=-\infty ,
\end{equation*}
so fixing $\hat{A}_{i}^{\ast }\in [\varphi _{i}(0),+\infty )$ such that $g_{\Gamma _{i}}(\hat{A}_{i}^{\ast })\leqslant 0$, we have 
\begin{equation*}
A_{i}(t)\leqslant \hat{A}_{i}(t)\leqslant \hat{A}_{i}^{\ast },\forall t\geqslant 0.
\end{equation*}%
Now since by assumption $t-\tau_i(t)\leq 0,\forall t\geq 0$, we obtain for each $t\geq 0$
\begin{equation}\label{EQ3.1}
1=\int_{t-\tau_i(t)}^{t}f_{i}(Z_{i}(\sigma ))d\sigma\geqslant \int_{0}^{t}f_{i}(Z_{i}(\sigma ))d\sigma\geqslant tf_{i}\left(\sum _{j=1}^{n}\zeta _{ij}\hat{A}_{j}^{\ast }\right)
\end{equation}
which impossible.\\ 
\textbf{Case 2:} We assume that exactly $j$ elements of $\{t_{i}^{\ast}\}_{i=1}^{n}$ are finite, where $1\leqslant j<n$, and we will prove that this is not possible, either. Without loss of generality we might assume that $t_{1}^{\ast},\dots ,t_{j}^{\ast}$ are finite. First we prove that $A_{1}(t),\ldots ,A_{j}(t)$ are bounded.


Following a similar argument as in case 1, for each $i=j+1,\dots ,n$, as $t_{i}^{\ast}$ is infinite, we can find $\hat{A}_{i}^{\ast }\in [\varphi _{i}(0),+\infty )$ such that 
\begin{equation*}
A_{i}(t)\leqslant \hat{A}_{i}^{\ast }.
\end{equation*}
For each $k=1,\dots ,j$, consider the solution
\begin{equation*}
z_{k}(t)=z_{k}(t;m_{k})=m_{k}e^{-\mu _{A_{k}}t},t\geqslant 0
\end{equation*}
of the following ordinary differential equation
\begin{equation}\label{EQ3.2}
z_{k}^{\prime }(t)=-\mu _{A_{k}}z_{k}(t),z_{k}(0)=m_{k}.
\end{equation}
We define $\tau _{k,m_{k}}>0$ satisfying
\begin{equation*}
\int _{0}^{\tau _{k,m_{k}}}f_{k}(\zeta _{kk}z_{k}(\sigma ))d\sigma =1.
\end{equation*}
When $\zeta _{kk}>0$ (as is assumed in Assumption \ref{ASS1.4}-(i)), since
\begin{equation*}
\int_{0}^{\tau }f_{k}(\zeta _{kk}z_{k}(\sigma ))d\sigma \geqslant \int_{0}^{\tau }f_{k}(\zeta _{kk}m_{k})d\sigma =\tau f_{k}(\zeta _{kk}m_{k})>0 \text{ when }\tau >0,
\end{equation*}
then $\tau _{k,m_{k}}>0$ exists and is finite. Next we observe that we have  
\begin{equation}\label{EQ3.3}
\tau _{k,m_{k}}\rightarrow +\infty \text{ as } m_{k}\rightarrow +\infty .
\end{equation}
Indeed, assume by contradiction that there exists a subsequence $\lbrace m_{k} \rbrace _{k \geq 0}\rightarrow +\infty$ and a sequence $\lbrace \tau _{k,m_{k}} \rbrace _{k \geq 0}$ bounded by $\tau^\star >0$. Then we have 
$$
1=\int _{0}^{\tau _{k,m_{k}}}f_{k}(\zeta _{kk}z_{k}(\sigma ))d\sigma \leq \int _{0}^{\tau^\star}f_{k}(\zeta _{kk}z_{k}(\sigma ))d\sigma \to 0 \text{ as } k \to +\infty
$$
impossible. \\
By Assumption \ref{ASS1.4}-(ii), for each $\forall c\geqslant 1$
\begin{equation*}
M_{f_{k}}(c):=\sup _{x\geqslant 0}\frac{f_{k}(x)}{f_{k}(cx)}<+\infty.
\end{equation*}
By using \eqref{EQ3.3} we can fix $m_{k}$ (large enough) such that  
\begin{equation*}
-\mu _{A_{k}}+\beta _{k}e^{-\mu _{J_{k}}\tau _{k,m_{k}}}M_{f_{k}}\left(\frac{\zeta _{k1}+\dots +\zeta _{kn}}{\zeta _{kk}}\right)<0.
\end{equation*}
For a constant $K>0$, define
\begin{equation*}
\tilde{t}:=\sup\{t\geqslant 0:\max\{A_{1}(s),\dots ,A_{j}(s)\}\leqslant K,\forall s\in [0,t]\}.
\end{equation*}
In order to prove the boundedness of $A_{1}(t),\ldots ,A_{j}(t)$, we assume by contradiction that $\tilde{t}$ is finite, then at least one of $A_{k}(t),k=1,\dots ,j$ reaches $K$ at $\tilde{t}$. We assume that $A_{1}(\tilde{t})=K$.  Let us prove that 
\begin{equation}\label{3.4}
\tilde{t}-\tau_1(\tilde{t})>0
\end{equation}
for each $K>0$ large enough. Otherwise using the same comparison principle arguments as in the case 1, we can prove that 
$$
K=A_{1}(\tilde{t})\leq \hat{A}_{1}^{\ast }
$$
which is impossible when $K$ is large enough. \\

Now we will prove $A_{1}(\tilde{t}-\tau _{1}(\tilde{t}))\rightarrow +\infty$ when $K\rightarrow +\infty$. As $\tilde{t}$ is finite, and by construction $A_{1}(t)\leqslant K$, $\forall t\in[0,\tilde{t}]$ we must have $A_{1}^{\prime }(\tilde{t})\geqslant 0$. Then
\begin{eqnarray*}
0 & \leqslant & A_{1}^{\prime}(\tilde{t})=-\mu _{A_{1}}A_{1}(\tilde{t})+\beta _{1}e^{-\mu _{J_{1}}\tau _{1}(\tilde{t})}\frac{f_{1}(Z_{1}(\tilde{t}))}{f_{1}(Z_{1}(\tilde{t}-\tau _{1}(\tilde{t})))}A_{1}(\tilde{t}-\tau _{1}(\tilde{t})) \\
& \leqslant & -\mu _{A_{1}}K+\beta _{1}\frac{f_{1}(\zeta _{11}K)}{f_{1}((\zeta _{11}+\dots +\zeta _{1n})\hat{K})}A_{1}(\tilde{t}-\tau _{1}(\tilde{t})),
\end{eqnarray*}
where
\begin{equation*}
\hat{K}:=\max\left\{K,\hat{A}_{j+1}^{\ast},\dots ,\hat{A}_{n}^{\ast},\max _{t\in [-\tau _{10},0]}\varphi _{1}(t),\dots ,\max _{t\in [-\tau _{n0},0]}\varphi _{n}(t)\right\}.
\end{equation*}
Notice that $\displaystyle\frac{(\zeta _{11}+\dots +\zeta _{1n})\hat{K}}{\zeta _{11}K}>1$, then
\begin{equation*}
A_{1}(\tilde{t}-\tau _{1}(\tilde{t}))\geqslant \frac{\mu _{A_{1}}}{\beta _{1}}\cdot\frac{f_{1}((\zeta _{11}+\cdots +\zeta _{1n})\hat{K})}{f_{1}(\zeta _{11}K)}\geqslant \frac{\mu _{A_{1}}K}{\beta _{1}}\cdot\frac{1}{M_{f_{1}}\left(\frac{(\zeta _{11}+\cdots +\zeta _{1n})\hat{K}}{\zeta _{11}K}\right)}.
\end{equation*}
Now since for all $K>0$ large enough $\hat{K}=K$, we deduce that 
$$
A_{1}(\tilde{t}-\tau _{1}(\tilde{t}))\rightarrow +\infty \text{ as } K\rightarrow +\infty. 
$$ 
By using \eqref{3.4}, we can fix $K$ large enough such that 
$$
A_{1}(\tilde{t}-\tau _{1}(\tilde{t}))\geqslant m_{1} \text{ and } \tilde{t}-\tau _{1}(\tilde{t}) \geq 0.
$$
By using the comparison principle on the equation (\ref{EQ3.2}) and
\begin{equation*}
A_{1}^{\prime }(t)\geqslant -\mu _{A_{1}}A_{1}(t),\forall t\geqslant \tilde{t}-\tau _{1}(\tilde{t})
\end{equation*}
with
\begin{equation*}
A_{1}(\tilde{t}-\tau _{1}(\tilde{t}))\geqslant m_{1},
\end{equation*}
we have 
\begin{equation*}
A_{1}(t)\geqslant z_{1}(t-\tilde{t}+\tau _{1}(\tilde{t})),\forall t\geqslant \tilde{t}-\tau _{1}(\tilde{t}).
\end{equation*}
An integration shows that 
\begin{eqnarray*}
1 &=&\int_{\tilde{t}-\tau _{1}(\tilde{t})}^{\tilde{t}}f_{1}(Z_{1}(\sigma ))d\sigma \leqslant \int_{\tilde{t}-\tau _{1}(\tilde{t})}^{\tilde{t}}f_{1}(\zeta _{11}A_{1}(\sigma ))d\sigma \\
&\leqslant &\int_{\tilde{t}-\tau _{1}(\tilde{t})}^{\tilde{t}}f_{1}(\zeta _{11}z_{1}(\sigma -\tilde{t}+\tau _{1}(\tilde{t})))d\sigma =\int_{0}^{\tau _{1}(\tilde{t})}f_{1}(\zeta _{11}z_{1}(\sigma ))d\sigma .
\end{eqnarray*}
By the definition of $\tau _{1,m_{1}}$, we have 
\begin{equation*}
\tau _{1}(\tilde{t})\geqslant \tau _{1,m_{1}}.
\end{equation*}
Now we have
\begin{eqnarray*}
0& \leqslant & A_{1}^{\prime }(\tilde{t})=-\mu _{A_{1}}A_{1}(\tilde{t})+\beta _{1}e^{-\mu _{J_{1}}\tau _{1}(\tilde{t})}\frac{f_{1}(Z_{1}(\tilde{t}))}{f_{1}(Z_{1}(\tilde{t}-\tau _{1}(\tilde{t})))}A_{1}(\tilde{t}-\tau _{1}(\tilde{t})) \\
& = & f_{1}(Z_{1}(\tilde{t}))\left[-\mu _{A_{1}}\frac{A_{1}(\tilde{t})}{f_{1}(Z_{1}(\tilde{t}))}+\beta _{1}e^{-\mu _{J_{1}}\tau _{1}(\tilde{t})}\frac{A_{1}(\tilde{t}-\tau _{1}(\tilde{t}))}{f_{1}(Z_{1}(\tilde{t}-\tau _{1}(\tilde{t})))} \right] \\
& \leqslant & f_{1}(Z_{1}(\tilde{t}))\left[-\mu _{A_{1}}\frac{K}{f_{1}(\zeta _{11}K)}+\beta _{1}e^{-\mu _{J_{1}}\tau _{1,m_{1}}}\frac{K}{f_{1}((\zeta _{11}+\dots +\zeta _{1n})K)}\right] \\
& = & \frac{f_{1}(Z_{1}(\tilde{t}))K}{f_{1}(\zeta _{11}K)}\left[-\mu _{A_{1}}+\beta _{1}e^{-\mu _{J_{1}}\tau _{1,m_{1}}}\frac{f_{1}(\zeta _{11}K)}{f_{1}((\zeta _{11}+\dots +\zeta _{1n})K)}\right] \\
& \leqslant & \frac{f_{1}(Z_{1}(\tilde{t}))K}{f_{1}(\zeta _{11}K)}\left[-\mu _{A_{1}}+\beta _{1}e^{-\mu _{J_{1}}\tau _{1,m_{1}}}M_{f_{1}}\left(\frac{\zeta _{11}+\dots +\zeta _{1n}}{\zeta _{11}}\right)\right]<0,
\end{eqnarray*}
which leads to a contradiction. Thus $\tilde{t}=+\infty$, namely 
$$
A_{k}(t)\leqslant K, \forall t\geqslant 0,\forall  k=1,\dots ,j.
$$
To conclude the proof it remains to observe that for each $i=j+1,\dots ,n$, we have the following formula similar to (\ref{EQ3.1}):
\begin{eqnarray*}
1&=& \int_{t-\tau_i(t)}^{t}f_{i}(Z_{i}(\sigma ))d\sigma \geqslant \int _{0}^{t}f_{i}(Z_{i}(\sigma ))d\sigma \\ &\geqslant&  \int _{0}^{t}f_{i}((\zeta _{i1}+\dots +\zeta _{ij})K+\zeta _{i,j+1}\hat{A}_{j+1}^{\ast}+\dots +\zeta _{in}\hat{A}_{n}^{\ast})d\sigma \\
& = & tf_{i}((\zeta _{i1}+\dots +\zeta _{ij})K+\zeta _{i,j+1}\hat{A}_{j+1}^{\ast}+\dots +\zeta _{in}\hat{A}_{n}^{\ast})
\end{eqnarray*}
which is impossible when $t$ is large enough. 
\end{proof}

Now we turn to the proof of Theorem \ref{TH1.5}.

\begin{proof}[Proof of Theorem \ref{TH1.5}]
For each $i=1,\dots ,n$, we define $\tau _{i,m_{i}}$ satisfying
\begin{equation*}
\int_{0}^{\tau _{i,m_{i}}}f_{i}(\zeta _{ii}z_{i}(\sigma ))d\sigma =1,
\end{equation*}
where $z_{i}(t)=m_{i}e^{-\mu _{A_{i}}t}$, $t\geqslant 0$. As before, we can find $m_{i}$ large enough such that
\begin{equation*}
\beta _{i}e^{-\mu _{J_{i}}\tau _{i,m_{i}}}M_{f_{i}}\left(\frac{\zeta _{i1}+\dots +\zeta _{in}}{\zeta _{ii}}\right)<\mu _{A_{i}}.
\end{equation*}
For a constant $K>0$, we define
\begin{equation*}
\tilde{t}:=\sup\{t>0:\max\{A_{1}(s),\dots ,A_{n}(s)\}\leqslant K,\forall s\in [0,t]\}.
\end{equation*}
Then similar to the proof of case 2 in Lemma \ref{LE3.3}, we can get a fixed $K$ large enough and we can deduce that $\tilde{t}=+\infty$. Thus $A_{i}(t)$ is bounded, $\forall t\geqslant 0$.
\end{proof}

\section{Dissipativity of the system}

In this section we will investigate the dissipativity of the system (\ref{EQ1.4}). First we will prove the following lemma.
\begin{lemma} \label{LE4.1} 
Let Assumptions \ref{ASS1.4} and \ref{ASS3.1} be satisfied. Suppose that $\tau _{i}(t)$ is the solution of (\ref{EQ1.4}), then
\begin{equation*}
\lim _{t\rightarrow +\infty}[t-\tau _{i}(t)]=+\infty .
\end{equation*}
\end{lemma}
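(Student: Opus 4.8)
The plan is to argue by contradiction, exploiting the normalization constraint from Assumption \ref{ASS3.1} together with the boundedness of solutions established in Theorem \ref{TH1.5}. Fix $i\in\{1,\dots,n\}$. By Theorem \ref{TH1.5} the solution $A_i$ is bounded on $[0,+\infty)$, and since $\zeta_{ij}\geqslant 0$ the combination $Z_i(t)=\sum_{j=1}^{n}\zeta_{ij}A_j(t)$ is bounded as well; set $C_i:=\sup_{t\geqslant 0}Z_i(t)<+\infty$. Because $f_i$ is strictly positive and nonincreasing by Assumption \ref{ASS1.4}-(ii) (hence Assumption \ref{ASS1.1}-(ii)), this yields the uniform lower bound
\begin{equation*}
f_i(Z_i(\sigma))\geqslant f_i(C_i)=:c_i>0,\qquad \forall \sigma\geqslant 0 .
\end{equation*}
Securing this strictly positive lower bound on the integrand, which is precisely where boundedness of the trajectory enters, is the crux of the argument.

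First I would record that, by Lemma \ref{LE3.2}, the map $t\mapsto t-\tau_i(t)$ is strictly increasing on $[0,+\infty)$, so that the limit
\begin{equation*}
\ell:=\lim_{t\rightarrow +\infty}\left[t-\tau_i(t)\right]
\end{equation*}
exists in $(-\infty,+\infty]$ as the supremum of a monotone function. Moreover, Lemma \ref{LE3.3} provides $t_i^{\ast}$ with $t_i^{\ast}-\tau_i(t_i^{\ast})=0$, so by monotonicity $t-\tau_i(t)>0$ for $t>t_i^{\ast}$ and therefore $\ell\geqslant 0$. It then remains only to exclude the possibility $\ell<+\infty$.

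Suppose for contradiction that $\ell<+\infty$. Since $t-\tau_i(t)$ increases to $\ell$, we have $t-\tau_i(t)\leqslant \ell$ for every $t\geqslant 0$, so that $[\ell,t]\subseteq[t-\tau_i(t),t]$ whenever $t\geqslant \ell$. Using the constraint equation from Assumption \ref{ASS3.1}, the nonnegativity of $f_i$ to restrict the interval, and then the lower bound $c_i$ (valid because every $\sigma\in[\ell,t]$ satisfies $\sigma\geqslant 0$),
\begin{equation*}
1=\int_{t-\tau_i(t)}^{t}f_i(Z_i(\sigma))\,d\sigma\geqslant \int_{\ell}^{t}f_i(Z_i(\sigma))\,d\sigma\geqslant c_i\,(t-\ell),\qquad \forall t\geqslant \ell .
\end{equation*}
Letting $t\rightarrow +\infty$ drives the right-hand side to $+\infty$, contradicting the value $1$. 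Hence $\ell=+\infty$, which is exactly the assertion $\lim_{t\rightarrow +\infty}[t-\tau_i(t)]=+\infty$. Since $i$ was arbitrary, the same reasoning applies for each $i=1,\dots,n$, completing the proof.

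I expect the only genuine subtlety to be the bookkeeping in the final display: one must check that $t-\tau_i(t)\leqslant \ell$ (from monotonic convergence) to justify shrinking the integration interval to $[\ell,t]$, and that $\ell\geqslant 0$ (from Lemma \ref{LE3.3}) so that the uniform bound $c_i$ applies throughout that interval. All the analytic substance is inherited from the boundedness result and the monotonicity of $t-\tau_i(t)$, so no further estimates on the state-dependent delay itself are needed.
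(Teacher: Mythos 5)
Your proof is correct, but it takes a genuinely different route from the paper's. The paper argues via the semiflow property: for an arbitrary $\hat{t}>0$ it restarts system (\ref{EQ1.4}) at time $\hat{t}$ with initial data $\hat{\varphi}_i=A_{i,\hat{t}}$, $\hat{\tau}_{i0}=\tau_i(\hat{t})$, reapplies Lemma \ref{LE3.3} to the translated system to find a crossing time $\hat{t}_i^\ast$ with $(\hat{t}_i^\ast+\hat{t})-\tau_i(\hat{t}_i^\ast+\hat{t})=\hat{t}$, and then uses the monotonicity from Lemma \ref{LE3.2} to conclude $t-\tau_i(t)\geqslant\hat{t}$ eventually; since $\hat{t}$ is arbitrary, the limit is $+\infty$. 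You instead invoke Theorem \ref{TH1.5} to get a uniform positive lower bound $c_i=f_i(C_i)$ on the integrand over $[0,+\infty)$, and show that a finite limit $\ell$ would force the normalized constraint integral to exceed $c_i(t-\ell)\to+\infty$, contradicting the value $1$. Your argument is shorter and avoids the translation step (which tacitly requires checking that the restarted data again satisfy the hypotheses of Lemma \ref{LE3.3} and that $\hat{\tau}_i(t)=\tau_i(t+\hat{t})$); the price is an explicit dependence on the boundedness theorem, which the paper's proof does not use. There is no circularity, since Theorem \ref{TH1.5} is established before Section 4 and does not rely on Lemma \ref{LE4.1}. Your bookkeeping is sound: monotonicity gives $t-\tau_i(t)\leqslant\ell$ so the interval shrinks to $[\ell,t]$, and $\ell\geqslant 0$ (from Lemma \ref{LE3.3}) ensures the bound $c_i$ applies there; one could even dispense with Lemma \ref{LE3.3} here, since $t-\tau_i(t)\geqslant-\tau_{i0}$ always and $Z_{i\varphi}$ is bounded on the compact interval $[-\tau_{i0},0]$.
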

\begin{proof} 
If $\tau _{i0}=0$, then $\tau _{i}(t)=0, \forall t \geqslant 0$, and the above result holds naturally. For $\tau _{i0}>0$ we can apply Lemma \ref{LE3.3}, which shows that there exists $t_{i}^{\ast}>0$ such that $t_{i}^{\ast}-\tau _{i}(t_{i}^{\ast})=0$. And by Lemma \ref{LE3.2}, $\forall t\geqslant t_{i}^{\ast}$, $t-\tau _{i}(t)\geqslant 0$, which means that $t-\tau _{i}(t)$ can cross 0 and go above.

Next, for any fixed $\hat{t}>0$, let $\hat{\varphi}_{i}=A_{i,\hat{t}}$ and $\hat{\tau}_{i0}=\tau _{i}(\hat{t})$, then by the semiflow property, the semiflow defined by system (\ref{EQ1.4}) with the new initial conditions $\hat{\varphi}_{i}$ and $\hat{\tau} _{i0}$ will be a translation of the semiflow of system (\ref{EQ1.4}) with the original initial conditions $\varphi _{i}$ and $\tau_{i0}$. Then we can repeat the previous proof of Lemma \ref{LE3.2}, thus we can find a time $\hat{t}_{i}^{\ast}>0$ such that 
\begin{equation*}
\hat{t}_{i}^{\ast}-\hat{\tau} _{i}(\hat{t}_{i}^{\ast})=0,
\end{equation*}
where $\hat{\tau}_{i}(t)$ is the solution of system (\ref{EQ1.4}) under the new initial conditions $\hat{\varphi}_{i}$ and $\hat{\tau} _{i0}$, which satisfies $\hat{\tau} _{i}(t)=\tau _{i}(t+\hat{t})$, $\forall t\geqslant 0$. Hence, we have
\begin{equation*}
(\hat{t}_{i}^{\ast}+\hat{t})-\tau _{i}(\hat{t}_{i}^{\ast}+\hat{t})=\hat{t}.
\end{equation*}
So for any $\hat{t}>0$, we can find $\hat{t}_{i}^{\ast}+\hat{t}>0$ such that $\forall t\geqslant\hat{t}_{i}^{\ast}+\hat{t}$, $t-\tau _{i}(t)\geqslant\hat{t}$. This completes the proof.
\end{proof}
\begin{definition} \label{DE4.2} 
For each real number $\delta \geqslant 0$ we define 
\begin{equation*}
D_{\delta}:=\left\{ (\varphi,\tau_0) \in X_\alpha ^{n} \times \left[ 0,+\infty \right) ^{n}: \dint _{-\tau _{i0}}^{0}f_{i}(Z_{i\varphi}(\sigma )) d\sigma \geqslant \delta, \forall i=1,...,n\right\},
\end{equation*}  
where $\varphi :=(\varphi _{1},\ldots ,\varphi _{n})$ and $\tau _{0}=(\tau _{10},\ldots ,\tau _{n0})$.
\end{definition}
\begin{theorem} 
Let Assumptions \ref{ASS1.4} be satisfied. Let $\alpha >0$. For each $\delta \geqslant 0$ the subset $D_{\delta}$ is positively invariant under the semiflow generated by the system (\ref{EQ1.4}). Moreover for each $\delta >0$ and each initial conditions $(\varphi,\tau_0)\in D_{\delta}$ satisfying $\tau _{0}>0$, there exists $M(\delta)>0$ (independent of the initial conditions)  such that 
\begin{equation*}
\limsup _{t\rightarrow +\infty} \max_{i=1,\ldots ,n} \{A_i(t)\}\leqslant M(\delta).
\end{equation*} 
\end{theorem}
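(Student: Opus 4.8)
My plan is to treat the two assertions separately, because the positive invariance is essentially a restatement of the conservation law built into the model, while the dissipative bound is a uniform (initial-condition-free) sharpening of the boundedness argument of Lemma \ref{LE3.3} and Theorem \ref{TH1.5}. For invariance, note that the state at time $t$ is the pair $(A_t,\tau(t))$ with $A_{i,t}(\theta)=A_i(t+\theta)$, and $(A_t,\tau(t))\in D_\delta$ means $\int_{-\tau_i(t)}^{0}f_i\big(\sum_j\zeta_{ij}A_j(t+\sigma)\big)\,d\sigma\geq\delta$ for all $i$. The substitution $s=t+\sigma$ turns this into $\int_{t-\tau_i(t)}^{t}f_i(Z_i(s))\,ds$, which by the second equation of (\ref{EQ1.4}) equals the time-independent quantity $\int_{-\tau_{i0}}^{0}f_i(Z_{i\varphi}(\sigma))\,d\sigma$. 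Hence if the initial datum lies in $D_\delta$, so does $(A_t,\tau(t))$ for every $t\geq0$; this holds for all $\delta\geq0$ and needs neither $\delta>0$ nor $\tau_0>0$.

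For the dissipative estimate, fix $\delta>0$ and $\tau_0>0$. By Theorem \ref{TH1.5} each $A_i$ is bounded, so $L:=\limsup_{t\to\infty}\max_i A_i(t)$ is finite, and $L=\limsup_{t\to\infty}A_{i_0}(t)$ for some index $i_0$; I will show $L\leq M(\delta)$ with $M(\delta)$ depending only on $\delta$ and the parameters $\mu_{A_i},\mu_{J_i},\beta_i,\zeta_{ij},f_i$. Fix a small $\varepsilon>0$, choose $T_\varepsilon$ with $A_j(t)\leq L+\varepsilon$ for all $t\geq T_\varepsilon$ and all $j$, and use Lemma \ref{LE4.1} (whose proof uses only $\tau_{i0}>0$ and the conservation law, and is unaffected by replacing the normalized value $1$ by the conserved value $\geq\delta$) to pick $T_1\geq T_\varepsilon$ with $t-\tau_i(t)\geq T_\varepsilon$ for all $t\geq T_1$ and all $i$. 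Taking $\tilde t=\tilde t_k\to\infty$, $\tilde t_k\geq T_1$, to be maximizers of $A_{i_0}$ over intervals $[T_1,S_k]$ with $S_k\to\infty$ gives points with $A_{i_0}(\tilde t_k)\geq L-\varepsilon$ and $A_{i_0}'(\tilde t_k)\geq0$.

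At such a point the inequality $A_{i_0}'(\tilde t)\geq0$ reads
\[
\mu_{A_{i_0}}A_{i_0}(\tilde t)\leq \beta_{i_0}e^{-\mu_{J_{i_0}}\tau_{i_0}(\tilde t)}\frac{f_{i_0}(Z_{i_0}(\tilde t))}{f_{i_0}(Z_{i_0}(\tilde t-\tau_{i_0}(\tilde t)))}A_{i_0}(\tilde t-\tau_{i_0}(\tilde t)).
\]
Using $Z_{i_0}(\tilde t)\geq\zeta_{i_0i_0}(L-\varepsilon)$ in the numerator and, since $\tilde t-\tau_{i_0}(\tilde t)\geq T_\varepsilon$, the bound $Z_{i_0}(\tilde t-\tau_{i_0}(\tilde t))\leq(\sum_j\zeta_{i_0 j})(L+\varepsilon)$ in the denominator, monotonicity of $f_{i_0}$ with (\ref{EQ1.5}) bounds the $f$-ratio by $M_{f_{i_0}}(c_\varepsilon)$, where $c_\varepsilon=\frac{(\sum_j\zeta_{i_0 j})(L+\varepsilon)}{\zeta_{i_0i_0}(L-\varepsilon)}\geq1$; dropping $e^{-\mu_{J_{i_0}}\tau_{i_0}(\tilde t)}\leq1$ then gives the lower bound $m:=A_{i_0}(\tilde t-\tau_{i_0}(\tilde t))\geq\frac{\mu_{A_{i_0}}(L-\varepsilon)}{\beta_{i_0}M_{f_{i_0}}(c_\varepsilon)}$, which tends to $+\infty$ as $L\to\infty$. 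Now the mechanism of Lemma \ref{LE3.3} enters: integrating $A_{i_0}'(t)\geq-\mu_{A_{i_0}}A_{i_0}(t)$ from $\tilde t-\tau_{i_0}(\tilde t)$ yields $A_{i_0}(\sigma)\geq m\,e^{-\mu_{A_{i_0}}(\sigma-\tilde t+\tau_{i_0}(\tilde t))}$ on $[\tilde t-\tau_{i_0}(\tilde t),\tilde t]$, so the conservation law and monotonicity of $f_{i_0}$ give $\delta\leq\int_{\tilde t-\tau_{i_0}(\tilde t)}^{\tilde t}f_{i_0}(Z_{i_0}(\sigma))\,d\sigma\leq\int_0^{\tau_{i_0}(\tilde t)}f_{i_0}(\zeta_{i_0i_0}m\,e^{-\mu_{A_{i_0}}s})\,ds$. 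Defining $\tau_{i_0,m}$ by $\int_0^{\tau_{i_0,m}}f_{i_0}(\zeta_{i_0i_0}m\,e^{-\mu_{A_{i_0}}s})\,ds=\delta$ (legitimately using $\delta$, since the conserved value is $\geq\delta$) gives $\tau_{i_0}(\tilde t)\geq\tau_{i_0,m}$, and the argument of (\ref{EQ3.3}) shows $\tau_{i_0,m}\to+\infty$ as $m\to+\infty$. Feeding this back, $\frac{\mu_{A_{i_0}}(L-\varepsilon)}{L+\varepsilon}\leq\beta_{i_0}e^{-\mu_{J_{i_0}}\tau_{i_0,m}}M_{f_{i_0}}(c_\varepsilon)$; as $m\to\infty$ with $L$, the right side tends to $0$ while the left stays bounded below, a contradiction once $L$ exceeds an explicit $M(\delta)$ (obtained by fixing $m^\ast$ with $\beta_{i_0}e^{-\mu_{J_{i_0}}\tau_{i_0,m^\ast}}M_{f_{i_0}}(\frac{\sum_j\zeta_{i_0 j}}{\zeta_{i_0i_0}})<\mu_{A_{i_0}}$, maximizing over $i_0$, and letting $\varepsilon\to0$). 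Every constant here depends only on $\delta$ and the model parameters, so $\limsup_{t\to\infty}\max_i A_i(t)\leq M(\delta)$.

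I expect the main obstacle to be uniformity in the initial data: the proof of Theorem \ref{TH1.5} produces a threshold that depends on $\max_t\varphi_i$ through the quantity $\hat K$, whereas here the bound must be initial-condition-free. This is resolved precisely by Lemma \ref{LE4.1}: since $t-\tau_i(t)\to+\infty$, the delayed arguments eventually lie past $T_\varepsilon$, so the delayed values are controlled by $L+\varepsilon$ rather than by initial data, and the normalization $\delta$ (rather than the trajectory-dependent conserved value) may be used throughout the delay estimate because all conserved values are $\geq\delta$.
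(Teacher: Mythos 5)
Your treatment of the positive invariance (via the conservation law $\int_{t-\tau_i(t)}^t f_i(Z_i(s))\,ds=\int_{-\tau_{i0}}^0 f_i(Z_{i\varphi}(\sigma))\,d\sigma$) is correct, and is in fact more explicit than the paper, whose proof does not address that part of the statement at all. The core of your dissipativity argument --- lower-bounding $A_{i_0}(\tilde t-\tau_{i_0}(\tilde t))$ at a point where $A_{i_0}'\geqslant 0$ and $A_{i_0}\approx L$, using the comparison $A_{i_0}(\sigma)\geqslant m\,e^{-\mu_{A_{i_0}}(\sigma-\tilde t+\tau_{i_0}(\tilde t))}$ together with the conserved value $\geqslant\delta$ to force $\tau_{i_0}(\tilde t)\geqslant\tau_{i_0,m}\to+\infty$, and feeding this back into the equation --- is exactly the mechanism of the paper's Case 1, and your use of Lemma \ref{LE4.1} to replace the initial-data-dependent constant $\hat K$ of Lemma \ref{LE3.3} by $L+\varepsilon$ is precisely what makes the bound independent of the initial conditions.

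The gap is in the construction of the points $\tilde t_k$. You take $\tilde t_k$ to be a maximizer of $A_{i_0}$ over $[T_1,S_k]$ and assert that $\tilde t_k\to\infty$ and $A_{i_0}'(\tilde t_k)\geqslant 0$. Neither assertion holds if $A_{i_0}$ is eventually strictly decreasing: the maximizer then sits at the left endpoint $T_1$ for every $k$, where the derivative may be negative, and no point $t$ in the tail satisfies $A_{i_0}'(t)\geqslant 0$ at all. This case cannot be discarded, since a solution decreasing slowly toward a large limit $L$ is exactly the kind of behaviour the theorem must exclude. The paper deals with it through an explicit dichotomy: Case 1 (a sequence of critical points $t_n\to\infty$ with $A_1(t_n)\to M$ and $A_1'(t_n)=0$) and Case 2 ($A_1$ eventually monotone, handled by showing $A_{1,t}'\to 0$ and passing to the limit in the equation, with $\liminf_{t\to+\infty}\tau_1(t)\geqslant \delta/f_1(\zeta_{11}M)$). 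Your argument is repaired most cheaply by invoking the fluctuation lemma, which supplies $t_n\to\infty$ with $A_{i_0}(t_n)\to L$ and $A_{i_0}'(t_n)\to 0$; every subsequent estimate of yours survives with the hypothesis $A_{i_0}'(\tilde t)\geqslant 0$ weakened to $A_{i_0}'(t_n)\geqslant -o(1)$.
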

\begin{proof}
Suppose that $A_{i}(t)$ is the solution of system (\ref{EQ1.4}) for $t\geqslant 0$. By Theorem \ref{TH1.5}, they are bounded. Set
\begin{equation*}
M:=\max\left\{\limsup _{t\rightarrow +\infty}A_{1}(t),\dots ,\limsup _{t\rightarrow +\infty}A_{n}(t)\right\}.
\end{equation*}
Without loss of generality we might assume that $M=\limsup\limits _{t\rightarrow +\infty}A_{1}(t)$. Then we have the following alternative: \\
\textbf{Case 1:} There exists a time sequence $\{t_{n}\}_{n\in\mathbb{N}}$ which satisfies $\lim\limits_{n\rightarrow +\infty}t_{n}=+\infty$ and for any $t_{n}$, 
\begin{equation*}
A_{1}^{\prime}(t_{n})=0,
\end{equation*}
and 
\begin{equation*}
A_{1}(t_{n})\rightarrow M\text{ as } n\rightarrow +\infty .
\end{equation*}
Then we have
\begin{eqnarray*}
0 & =& A_{1}^{\prime}(t_{n})=-\mu _{A_{1}}A_{1}(t_{n})+\beta _{1}e^{-\mu _{J_{1}}\tau _{1}(t_{n})}\frac{f_{1}(Z_{1}(t_{n}))}{f_{1}(Z_{1}(t_{n}-\tau _{1}(t_{n})))}A_{1}(t_{n}-\tau _{1}(t_{n})) \\
& \leqslant & -\mu _{A_{1}}M+\beta _{1}\frac{f_{1}(\zeta _{11}M)}{f_{1}((\zeta _{11}+\dots +\zeta _{1n})M)}\lim_{n\rightarrow +\infty}A_{1}(t_{n}-\tau _{1}(t_{n})).
\end{eqnarray*}
So
\begin{equation*}
\lim_{n\rightarrow +\infty}A_{1}(t_{n}-\tau _{1}(t_{n}))\geqslant \frac{\mu_{A_{1}}M}{\beta _{1}}\cdot\frac{f_{1}(\zeta _{11}M)}{f_{1}((\zeta _{11}+\dots +\zeta _{1n})M)}\geqslant \frac{\mu_{A_{1}}M}{\beta _{1}}\cdot\frac{1}{M_{f_{1}}\left(\frac{\zeta _{11}+\dots +\zeta _{1n}}{\zeta _{11}}\right)}.
\end{equation*}
Then we get $\lim\limits_{n\rightarrow +\infty} A_{1}(t_{n}-\tau _{1}(t_{n}))\rightarrow +\infty$ as $M\rightarrow +\infty$. We assume that we can choose $M$ large enough such that $A_{1}(t_{n}-\tau _{1}(t_{n}))\geqslant m_{1}$. Since we have
\begin{eqnarray*}
\delta & \leqslant &\int_{\tilde{t}-\tau _{1}(\tilde{t})}^{\tilde{t}}f_{1}(Z_{1}(\sigma ))d\sigma \leqslant \int_{\tilde{t}-\tau _{1}(\tilde{t})}^{\tilde{t}}f_{1}(\zeta _{11}A_{1}(\sigma ))d\sigma \\
&\leqslant &\int_{\tilde{t}-\tau _{1}(\tilde{t})}^{\tilde{t}}f_{1}(\zeta _{11}z_{1}(\sigma -\tilde{t}+\tau _{1}(\tilde{t})))d\sigma =\int_{0}^{\tau _{1}(\tilde{t})}f_{1}(\zeta _{11}z_{1}(\sigma ))d\sigma ,
\end{eqnarray*}
then we can repeat a similar argument as before and we deduce the contradiction 
\begin{equation*}
0=\lim\limits _{n\rightarrow +\infty}A_{1}^{\prime}(t_{n})<0.
\end{equation*}
Thus there exists a certain constant $\tilde{M}$ such that $M<\tilde{M}$, which means that
\begin{equation*}
\limsup _{t\rightarrow +\infty}A_{i}(t)<\tilde{M},\forall i=1,\dots ,n.
\end{equation*}
\textbf{Case 2:} The solution $A_{1}(t)$ is eventually monotone. So we can assume that there exists a time $\bar{t}>0$ such that
\[
A_{1}^{\prime}(t)\geqslant 0, \forall t\geqslant \bar{t}
\]
(the case  $A_{1}^{\prime}(t)\leqslant 0$ being similar). Since $A_1(t)$ is eventually increasing, we deduce that 
\[
\lim_{t \to \infty }A_{1,t}=M \text{ in } C_{\alpha}:=\left\{ \phi \in C(-\infty ,0]:e^{-\alpha \vert
.\vert }\phi (.) \text{ is bounded}\right\}
\]  
where $C_{\alpha}$ is the Banach space endowed with the norm $\Vert \phi \Vert:= \Vert e^{-\alpha \vert .\vert }\phi (.)\Vert_{\infty} $.  

Since $A(t)$ is bounded, and $A_{1,t}^{\prime}$ is relatively compact in $X_{\alpha}$ (since $\alpha>0$, $A_{i}(t)$ satisfies the system (\ref{EQ1.4}) and by applying Arzel\`{a}-Ascoli theorem locally on the bounded interval $[-\theta^\ast,0]$  (for each $\theta^\ast>0$) and by using the step method to extend to $(-\infty ,0]$???), we have
\begin{equation*}
\lim _{t\rightarrow +\infty}A_{1,t}^{\prime}=0 \text{ in } L^{\infty}_{\alpha}:= \left\{ \phi :e^{-\alpha \vert .\vert }\phi (.) \in L^{\infty}(-\infty ,0]\right\}
\end{equation*}
where $L^{\infty}_{\alpha}$ is the Banach space endowed with the norm $\Vert \phi \Vert:= \Vert e^{-\alpha \vert .\vert }\phi (.)\Vert_{L^{\infty}} $. 
 
Furthermore we have
\[
\delta \leqslant \int_{t-\tau _{1}(t)}^{t}f_{1}(Z_{1}(\sigma ))d\sigma \leqslant \int_{t-\tau _{1}(t)}^{t}f_{1}(\zeta _{11}A_{1}(\sigma ))d\sigma 
\]
and by taking the limit when $t \to +\infty$ (and since by lemma \ref{LE4.1}  $t-\tau _{1}(t)\rightarrow  +\infty$) we obtain 
\[
\liminf_{t \to +\infty} \tau _{1}(t) \geqslant \frac{\delta }{f_{1}(\zeta _{11}M)}.
\]
So by taking the limit in the system (\ref{EQ1.4}) we obtain 
\[
0=\lim _{t\rightarrow +\infty}A_{1,t}^{\prime}\leqslant -\mu _{A_{1}}M+\beta _{1}e^{-\mu _{J_{1}}\frac{\delta }{f_{1}(\zeta _{11}M)}}\frac{f_{1}(\zeta _{11} M)}{f_{1}((\zeta _{11}+\dots +\zeta _{1n})M)}M.
\]
Now by using the same argument as before we deduce that this is impossible when $M$ is large enough.  
\end{proof}


\begin{thebibliography}{99}

\bibitem{Aiello} W. G. Aiello, H. I. Freedman and J. Wu, Analysis of a model representing stage-structured population growth with state-dependent time delay, \textit{SIAM Journal on Applied Mathematics}, \textbf{52}(3) (1992), 855-869.

\bibitem{Alomari} J. F. M. Al-Omari and S. A. Gourley, Dynamics of a stage-structured population model incorporating a state-dependent maturation delay, \textit{Nonlinear Analysis: Real World Applications}, \textbf{6} (2005), 13-33.

\bibitem{Arino} O. Arino, M. L. Hbid, R. Bravo de la Parra, A mathematical model of growth of population of fish in the larval stage: Density-dependence effects, \textit{Mathemathcal Biosciences}, \textbf{150} (1998), 1-20.

\bibitem{Hartung} F. Hartung, T. Krisztin, H.O. Walther, and J. Wu, Functional differential equations with state-dependent delays: Theory and applications, \textit{Handbook Of Differential Equations: Ordinary Differential Equations}, Vol. 3, 435-545, Elsevier, 2006.

\bibitem{Hbid} M. L. Hbid, M. Louihi and E. S\'{a}nchez, A threshold state-dependent delayed functional equation arising from marine population dynamics: modelling and analysis, \textit{Journal of Evolution Equations}, \textbf{10}(4) (2010), 905-928.

\bibitem{Kloosterman} M. Kloosterman, S. A. Campbell and F. J. Poulin, An NPZ model with state-dependent delay due to size-structure in juvenile zooplankton, \textit{SIAM Journal on Applied Mathematics}, \textbf{76}(2) (2016), 551-577.

\bibitem{Magal2} P. Magal and Z. Zhang, Competition for light in a forest population dynamic model: from computer model to mathematical model, \textit{Journal of Theoretical Biology}, \textbf{419} (2017), 290-304.

\bibitem{Magal3} P. Magal and Z. Zhang, A system of state-dependent delay differential equation modelling forest growth I: semiflow properties, \textit{in preparation}.



\bibitem{Smith1} H. L. Smith, Reduction of structured population models to threshold-type delay equations and functional differential equations: A case study, \textit{Mathematical Biosciences}, \textbf{113} (1993), 1-23.

\bibitem{Smith2} H. L. Smith, A structured population model and a related functional differential equation: global attractors and uniform persistence, \textit{Journal of Dynamics and Differential Equations}, \textbf{6}(1) (1994), 71-99.

\bibitem{Smith3} H. L. Smith, Existence and uniqueness of global solutions for a size-structured model of an insect population with variable instar duration, \textit{Rocky Mountain Journal of Mathematics}, \textbf{24}(1) (1994), 311-334.

\bibitem{Smith4} H. L. Smith, Equivalent dynamics for a structured population model and a related functional differential equation, \textit{Rocky Mountain Journal of Mathematics}, \textbf{25}(1) (1995), 491-499.

%
%


\bibitem{Walther} H.-O. Walther, Differential equations with locally bounded delay, \textit{Journal of Differential Equations}, \textbf{252} (2012), 3001-3039.

\end{thebibliography}
\end{document}